\tikzset{
  treenode/.style = {shape=rectangle, rounded corners,
                     draw, align=center,
                     top color=white, bottom color=blue!20},
  root/.style     = {treenode, font=\Large, bottom color=red!30},
  env/.style      = {treenode, font=\ttfamily\normalsize},
  dummy/.style    = {circle,draw,minimum size = 0.6cm,pattern=crosshatch, pattern color = black!50},
  graph/.style    = {circle,draw,minimum size = 0.6cm,pattern=dots, pattern color = black!25},
  action/.style   = {circle,draw,minimum size = 0.6cm,pattern=crosshatch, pattern color = black!25},
}
\newtheorem{lemma}{Lemma}
\newtheorem{theorem}{Theorem}
\newtheorem{definition}{Definition}
\newtheorem{remark}{Remark}
\newtheorem{proposition}{Proposition}
\newcommand{\lap}[0]{{\mathcal{L}}}
\newcommand{\graph}[0]{{\mathcal{G}}}
\newcommand{\Graph}[0]{{\mathcal{G}}}
\newcommand{\Tr}{\mathrm{Tr}}
\newcommand{\Nodes}{\mathcal{N}}
\newcommand{\Edges}{\mathcal{E}}
\newcommand{\Weights}{\mathcal{W}}
\newcommand{\Htwo}{{\mathcal{H}_2}}
\newlength\mylen
\newcommand{\removelatexerror}{\let\@latex@error\@gobble}
\renewcommand{\@algocf@capt@plain}{above}
\title{\LARGE \bf
Efficient Computation of ${\cal H}_2$ Performance \\on Series-Parallel Networks
}
\author{Mathias Hudoba de Badyn and Mehran Mesbahi
\thanks{The research of the authors was supported by NSERC (ref \# CGSD2-502554-2017), the U.S. ARL and the U.S. ARO (contract \# W911NF-13-1-0340), and the U.S. AFOSR (funding \# FA9550-16-1-0022). }%
\thanks{The authors are with the William E. Boeing Department of Aeronautics and Astronautics at the University of Washington, Seattle, WA 98195 USA e-mails: \texttt{\{hudomath,mesbahi\}@uw.edu}.}%
}
\begin{document}

\maketitle
\thispagestyle{empty}
\pagestyle{empty}

\begin{abstract}
Series-parallel networks are a class of graphs on which many NP-hard problems have tractable solutions.
In this paper, we examine performance measures on leader-follower consensus on series-parallel networks.
We show that a distributed computation of the $\Htwo$ norm can be done efficiently on this system by exploiting a decomposition of the network into atomic elements and composition rules.
Lastly, we examine the problem of adaptively re-weighting the network to optimize the $\Htwo$ norm, and show that it can be done with similar complexity.
\end{abstract}


\section{Introduction}

Large-scale complex networks appear in many natural and synthetic systems, such as animal flocking \cite{Canizo2011}, robotic swarms~\cite{Hudobadebadyn2018}, oscillators \cite{Matheny2019}, opinion dynamics \cite{Hegselmann2002} and infrastructure networks \cite{Alemzadeh2018}.
Work on understanding the effect of the network structure on the behaviour of these systems has helped develop  many control and estimation algorithms.

An algorithm of particular interest is \emph{consensus}, a distributed information-sharing protocol over a network, with diverse applications such as Kalman filtering~\cite{Olfati-Saber2005,HudobadebadynFillt2017}, multi-agent systems~\cite{Chen2013a}, swarm deployment~\cite{Hudobadebadyn2018}, and robotics~\cite{Joordens2009}.
The controllability of consensus is related to symmetries of the graph; automorphisms fixing the control input yield uncontrollable modes~\cite{Rahmani2009a,Chapman2015a,Alemzadeh2017}.
This argument was extended to nonlinear variants of the protocol~\cite{Aguilar2014}, and graphs admitting both positive and antagonistic interactions~\cite{Alemzadeh2017,Hudobadebadyn2017,Altafini2013}.

One concern is how the algorithm performs with respect to various measures, such as entropy~\cite{HudobaDeBadyn2015a,Siami2017}, or disturbance rejection in the form of the $\Htwo$ norm~\cite{Siami2014a,Chapman2015,Chapman2013a}.
Such measures provide a metric for the underlying dynamical system to be able to autonomously modify the topology of the network to adapt to antagonistic influences~\cite{Chapman2013a,Chapman2015}.

The notions of performance and control of consensus all rely on the underlying topology of the network.
Several famous models for graph design are scale-free networks~\cite{Albert2002} and the Erdos-Renyi random graph model~\cite{Mesbahi2010}.
Graph growth models have been considered both for control~\cite{Hudobadebadyn2016} and performance~\cite{Siami2016}.
A more axiomatic approach is to take small, atomic elements and build graphs out of these elements, or to use graph-growing operations that preserve properties like controllability~\cite{Hudobadebadyn2016,chapman2014cart,Alemzadeh2018}.

In this paper, we seek to understand how system-theoretic measures can be computed from atomic elements that build up the graph.
We approach the performance problem on leader-follower consensus in this spirit by considering \emph{series-parallel networks}.
These networks can be decomposed into a series of atomic elements and composition operations in linear~\cite{Valdes1979} and sublinear~\cite{Eppstein1992} time.
Their key property is that many NP-hard problems on general classes of graphs become linear on series-parallel graphs~\cite{Takamizawa1986}.

The contributions and organization of the paper are as follows.
By exploiting the structure of series-parallel networks, we are able to present a way of computing the $\Htwo$ norm of a leader-follower consensus network in best-case $\mathcal{O}(|\mathcal{R}|\log|\Nodes|)$ (worst case $\mathcal{O}(|\mathcal{R}||\Nodes|)$) complexity, where $|\mathcal{R}|, |\Nodes|$ are the number of leaders and followers, respectively.
We also provide a method for adaptively re-weighting the network to optimize $\Htwo$ performance that utilizes computations of similar complexity by invoking an electrical network analogy of the consensus dynamics.
In \S\ref{sec:math-prel}, we outline the notation used in the paper.
The problem statement, background on series-parallel networks, and the electrical analogy of consensus are in \S\ref{sec:graph-compositions}.
Our main results on efficient $\Htwo$ computation and adaptive re-weighting are presented in \S\ref{sec:syst-theor-comp}, with conclusions in \S\ref{sec:conclusion}.

\section{Mathematical Preliminaries}
\label{sec:math-prel}

\subsection{Graph and Edge Laplacians}

The vector $e_i\in\mathbb{R}^n$ has `1' in its $i$th entry, and zeros elsewhere, and we denote the set $\mathbb{R}_{++}=\{x\in \mathbb{R}:x>0\}$.
The \emph{parallel addition} of two scalars $x,y\neq 0$ is defined as $x:y \triangleq (x^{-1} + y^{-1})^{-1}$.
A \emph{graph} $\Graph$ is a triple of sets $(\Nodes,\Edges,\Weights)$ where $\Nodes$ is a set of \emph{nodes}, $\Edges\subseteq \Nodes^2$ is a set of \emph{edges} denoting pairwise connections between nodes, and $\Weights \in \mathbb{R}^{|\Edges|}$ is a set of \emph{weights} on the edges.
We use the standard graph theoretic notation regarding graphs, and adjacency, degree, incidence and Laplacian matrices~\cite{Mesbahi2010}.
Given two graphs $\Graph_1,\Graph_2$, one may combine them through the operation of \emph{identifying nodes}.
Suppose $s_1\in\Nodes_1,s_2\in\Nodes_2$.
Then, we write $\Graph \leftarrow s_1\sim s_2$ to indicate that $\Nodes(\graph) = (\Nodes_1\setminus \{s_1\})\cup(\Nodes_2\setminus\{s_2\})\cup \{s\}$, where if $s_1i\in\Edges(\Graph_1)$ or $s_2i\in \Edges(\Graph_2)$ implies that $si\in\Edges(\Graph)$ (similarly, if considering directed edges, we have that if $is_1\in\Edges(\Graph_1)$ or $is_2\in \Edges(\Graph_2)$ implies that $is\in\Edges(\Graph)$).
Such an operation is depicted in Figure~\ref{fig:leaderFollower}: the two orange squares in the oval denoted by $\mathcal{R}$ are identified to the node $l$ in the bottom graph.




A \emph{tree} is a connected graph with no cycles, and a \emph{leaf} is designated as a node of degree 1.
A \emph{binary tree} is a tree where one node is designated as the \emph{root}, and all the nodes of $\mathcal{T}$ are either leaves or \emph{parents}.
Each parent in a binary tree can have at most two \emph{children} and one parent, except the root which has no parents.
The \emph{height} $h$ of a binary tree is the length of the longest path from the root to a leaf.
A \emph{complete} (sometimes called \emph{full}) binary tree is one which each node has either zero or two children.

\section{Network Models}
\label{sec:graph-compositions}
\subsection{Problem Statement}
\label{sec:problem-statement}

In this paper, we examine the leader-follower consensus problem.
Consider a connected weighted graph $\mathcal{G} = (\Nodes,\Edges,\Weights)$ with Laplacian $\lap$.
Denote a set of \emph{leaders} $\mathcal{R}\subset \Nodes$, and a set of \emph{followers} $\Nodes\setminus\mathcal{R}$.
Further suppose that each leader is connected to a single unique node in $\Nodes\setminus\mathcal{R}$, called the \emph{input} nodes, denoted by $R$ (see Figure~\ref{fig:leaderFollower} for a schematic of the setup).
Denote the set of input nodes by $R$.
Then, the graph Laplacian of $\graph$ can be partitioned as follows:
\begin{figure}
  \centering
  \raisebox{-.5\height}{%
    \includegraphics[width=0.4\columnwidth]{./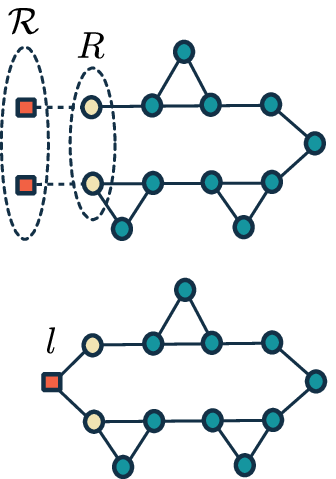}%
  }\qquad
  \raisebox{-.5\height}{%
    \includegraphics[width=0.4\columnwidth]{./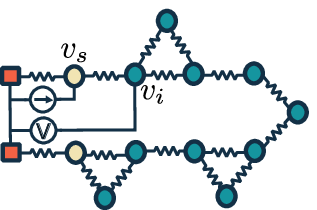}%
  }
  \caption{Top: leader-follower network setup. Right: electrical ``grounding'' of the leader set $\mathcal{R}$, and current vector $e_s$ injected into the network. $y_s^i$ is the voltage dropped from $v_i$ to $\mathcal{R}$. Bottom: Identification of grounded leader set into a single node.}
  \label{fig:leaderFollower}
\end{figure}
\begin{align}
  \lap_\graph = \left[
  \begin{array}{c|c}
    \mathcal{L}_{\mathcal{R}}  & -B^TW_{\mathcal{R}} \\\hline
    -W_{\mathcal{R}}B & \mathcal{L}_{\mathcal{G}(\Nodes\setminus\mathcal{R})} + \sum_{i\in {R}} e_ie_i^T
  \end{array}\right],\label{eq:4}
\end{align}
where $W_{\mathcal{R}}$ is a diagonal matrix containing the weights of the edges connecting ${\mathcal{R}}$ to $R$.
Suppose we can observe individual follower nodes.
Therefore, the control and observation matrices are given by $B^T = [e_{i_1}~\dots~e_{i_m}]$ and $C = [e_{j_1}~\dots~e_{j_o}]$,
where $R = \{i_1,\dots,i_m\}$ are the nodes attached to leaders, and $ \{j_1,\dots,j_o\}$ are the nodes under observation.
Note that the form of these matrices assumes that each leader is attached to a single, unique follower.
Next, recall that one can write the graph Laplacian in terms of the incidence and weight matrices as
$
 \mathcal{L}_{\mathcal{G}(\Nodes\setminus\mathcal{R})}= EWE^T = \sum_{e\in\Edges} w_{e}a_{e}a_{e}^T.
$
Then, the corresponding leader-follower consensus dynamics, with $W_{\mathcal{R}}=I$, are given by 
\begin{align}
  \dot x &= - \left(  \mathcal{L}_{\mathcal{G}(\Nodes\setminus\mathcal{R})} + \sum_{i\in R} e_ie_i^T \right)x + Bu,~
  \tilde{y} = Cx.\label{eq:11}
\end{align}
The matrix $A \triangleq - (  \mathcal{L}_{\mathcal{G}(\Nodes\setminus\mathcal{R})} + \sum_{i\in R} e_ie_i^T )$ is negative definite if the underlying graph is connected, and thus invertible.

\subsection{$\mathcal{H}_2$ Norm}

For a linear system and corresponding transfer function,
\begin{align}
  \dot x &= Ax + Bu,~y= Cx,
  G(s) = C(sI-A)^{-1}B,
\end{align}
the $\mathcal{H}_2$ norm is defined as 
\begin{align}
  \|G(s)\|_2^2 = \dfrac{1}{2\pi} \int_{-\infty}^\infty \Tr \left[G(j\omega)^TG(j\omega) \right] d\omega,
\end{align}
and measures the root-mean-square of the inpulse response of the system, or equivalently the steady-state covariance of the system under zero-mean, unit-covariance white noise inputs.
The $\mathcal{H}_2$ norm of the leader-follower consensus dynamics is discussed in \S\ref{sec:syst-theor-comp}.

\subsection{Electrical Network Models}

An interesting perspective on consensus networks comes from viewing the underlying graph as an electrical network of resistors~\cite{Chapman2013a,Barooah2008}.
In the leader-follower setup of \S\ref{sec:problem-statement}, consider the graph $\graph = (\Nodes,\Edges,\Weights)$.
For each edge $e=ij\in\Edges$, consider placing a resistor between nodes $i$ and $j$ with conductance $w_{ij}$ (resistance $w_{ij}^{-1}$).
Then, the \emph{effective resistance} between arbitrary nodes $k,l\in \Nodes$ is given by 
\begin{align}
  R_{kl} = (e_k - e_l) \lap^\dag (e_k-e_l).\label{eq:9}
\end{align}

Alternately, consider `grounding' all the leader nodes $r\in\mathcal{R}$, i.e., identifying them all as one node, or electrically connecting them together by wires.
Then, the diagonal entries of the matrix $A^{-1}$, with $A$ from~\eqref{eq:11}, yield the effective resistances between the $i$th node in $\Nodes\setminus\mathcal{R}$ to the leader node set, i.e. $[A^{-1}]_{ii}$ is the the effective resistance from $i$ to $\mathcal{R}$.

If $x\in\mathbb{R}^{|\Nodes|}$ denotes a vector of current injected into the nodes $\Nodes\setminus\mathcal{R}$ from $\mathcal{R}$, then the quantity $[A^{-1}x]_i$ is the \emph{voltage drop} from node $v_i$ to the grounded leader node set.
If $x = e_s$, then this corresponds to a current of 1A injected into the node $v_s\in R$, see Figure~\ref{fig:leaderFollower} for a schematic of this setup.
In this case, we write $y_i^s \triangleq [A^{-1}e_s]_i$ as the voltage drop of node $i$.

\subsection{Series-Parallel Graph Models}

In this paper, we consider the class of graphs known as \emph{series-parallel graphs}.
Given a series-parallel graph there exist extremely efficient ($\mathcal{O}(|\Nodes| + |\Edges|)$, $\mathcal{O}(\log |\Nodes|)$ and $\mathcal{O}(\log^2 |\Nodes|)$) algorithms that decompose the graph into atomic structures and simple composition operations on them~\cite{Valdes1979,Eppstein1992,He1987}.
This decomposition allows for efficient solutions to problems that are otherwise NP-hard on general classes of graphs~\cite{Takamizawa1986}.

\begin{definition}[Two-Terminal Series-Parallel Graphs]
  \label{def:ttsp}
A directed acyclic graph is called \emph{two-terminal series-parallel TTSP} if  it can be defined recursively as follows:
\begin{enumerate}
  \item The graph defined by two vertices connected by an edge (a \emph{1-path}) is a TTSP graph, where one node is labeled the \emph{source}, and the other the \emph{sink}.
  \item If $\graph_1=(\Nodes_1,~\Edges_1)$ and $\graph_2=(\Nodes_2,~\Edges_2)$ are TTSP where $\mathcal{S}_i = \{s_i\},\mathcal{T}_i=\{t_i\}$ are the unique source and sink of $\graph_i$, then the following operations produce TTSP graphs: 
    \begin{enumerate}
      \item \textbf{Parallel Addition:} $\graph_p \leftarrow s_1\sim s_2,~t_1\sim t_2$.
      \item \textbf{Series Addition:} $\graph_s \leftarrow t_1\sim s_2  $.
    \end{enumerate}
\end{enumerate}  
Denote the parallel join of $\graph_1$ and $\graph_2$ as $\graph_1\oslash\graph_2$, and the corresponding series join as $\graph_1\odot\graph_2$.
\end{definition}

\begin{figure}
  \centering
  \includegraphics[width=0.6\columnwidth]{./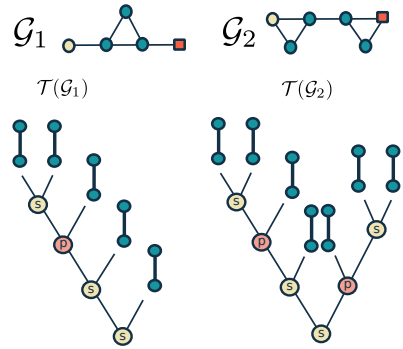}
  \caption{Decomposition trees of two graphs.}
  \label{fig:decompTree}
\end{figure}

The two recursive operations defining the TTSP graph model allow for a simple constructive approach for defining graphs from atomic elements.
Indeed, efficient algorithms exist that decompose TTSP graphs into a \emph{decomposition tree} with the following structure~\cite{Valdes1979,Eppstein1992,He1987}. 
\begin{definition}[TTSP Decomposition Tree]
  A \emph{TTSP decomposition tree} of a TTSP graph $\graph$ is a binary tree $\mathcal{T}(\graph)$ with the following properties: 
  \begin{enumerate}
    \item $\mathcal{T}$ is a complete (sometimes called \emph{full}) binary tree, in that every node has either 2 or 0 children.
    \item Every leaf of $\mathcal{T}$ corresponds to a 1-path.
    \item Every parent of $\mathcal{T}$ corresponds to either a series or parallel addition operation from Definition~\ref{def:ttsp} on its children.
  \end{enumerate}
\end{definition}
\begin{remark}
  In general, one can ignore the assumption of directed edges in Definition~\ref{def:ttsp} and consider undirected TTSP graphs, see~\cite{Eppstein1992}.
\end{remark}
Examples of TTSP graphs and their decomposition trees are shown in Figure~\ref{fig:decompTree}.
The graph is constructed by a reverse breadth-first-search: the deepest leaves combine each other by the series or parallel join designated by their parent.
At each layer of the decomposition tree, each operation can be performed independently of the other nodes in the layer.
Hence, the complexity of the reconstruction operation is limited by the height of the decomposition tree; it is this key insight that allows the efficient computations of the system-theoretic measures in the remainder of the paper.
First, in the following proposition, we quantify the  height of the tree in terms of the size of the resulting graph.

\begin{proposition}[Properties of $\mathcal{T}(\graph)$]
  \label{prop:tree}
  Let $\graph$ be a TTSP graph with $N$ nodes constructed from $l$ 1-paths with $p$ parallel joins and $s$ series joins.
  Then, 
  \begin{enumerate}
    \item $\graph$ has $N=2l-2p-s$ nodes and $E=l$ edges.
    \item The decomposition tree $\mathcal{T}(\graph)$ of $\graph$ has $n = 2l-1$ nodes
    \item The height $h$ of $\mathcal{T}(\graph)$ is bounded by 
      \begin{align}
        \log_2\left(N+2p-s\right) \leq& h \leq \frac{N+2p+s}{2} -1\label{eq:3}\\
        \log_2(E) \leq& h \leq E-1\label{eq:7}
      \end{align}
  \end{enumerate}
\end{proposition}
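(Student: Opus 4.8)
The plan is to handle claims (1) and (2) by direct bookkeeping on the recursive construction, and to reduce claim (3) to classical height bounds for full binary trees. For (1): the $l$ disjoint 1-paths out of which $\graph$ is assembled together have $2l$ vertices and $l$ edges; since $\graph$ is connected and each join merges two components into one, exactly $p + s = l - 1$ joins occur. Neither $\odot$ nor $\oslash$ ever creates or destroys an edge — each only identifies vertices — so $E = l$. A series addition identifies a single pair of vertices (one sink with one source) and hence lowers the vertex count by $1$, whereas a parallel addition identifies two pairs (source–source and sink–sink) and lowers it by $2$; summing over all joins gives $N = 2l - 2p - s$. For (2): the decomposition tree has one leaf per 1-path and one internal node per join, and it is \emph{full} because every join has exactly two operands, so every internal node has exactly two children; hence it has $n = l + (l-1) = 2l - 1$ nodes. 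Both statements are formalized by a one-line induction on the number of joins, using that $\odot$ and $\oslash$ add the corresponding vertex/edge/tree-node counts of the two operands and then apply the constant offset just described.

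For claim (3) I would first prove \eqref{eq:7} and then change variables to obtain \eqref{eq:3}. The key ingredient is the pair of classical bounds for a full binary tree with $L$ leaves: any binary tree of height $h$ has at most $2^h$ leaves, so $h \ge \log_2 L$; and a full binary tree of height $h$ has at least $h + 1$ leaves, the extremal case being the caterpillar, so $h \le L - 1$. Each is a short induction on $h$. Since $\mathcal{T}(\graph)$ is full with $L = l = E$ leaves by claims (1)–(2), these two bounds are exactly \eqref{eq:7}. Re-expressing $l$ in the node-count parameters via the identity $2l = N + 2p + s$ from claim (1) then rewrites \eqref{eq:7} as \eqref{eq:3}.

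The only step that genuinely requires care is the upper bound $h \le E - 1$, equivalently the "at least $h+1$ leaves" estimate: fullness is indispensable here, since an arbitrary binary tree of height $h$ can have a single leaf (a path has height $h$ but only one leaf), so no height bound follows from the leaf count alone. Everything else is routine counting and substitution.
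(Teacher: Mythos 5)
Your proof is correct and follows essentially the same route as the paper: the same bookkeeping of how 1-paths, series joins, and parallel joins contribute edges and identify vertices for claims (1)–(2), and the standard extremal bounds for full binary trees for claim (3). The only cosmetic difference is that you state the height bounds in terms of the leaf count $L = l$ while the paper states them in terms of the node count $n = 2l-1$; since $n = 2L - 1$ for a full binary tree these are the same bounds, and both derivations in fact yield $\log_2(N+2p+s)-1$ as the lower bound in \eqref{eq:3}, matching the paper's proof rather than the $\log_2(N+2p-s)$ printed in the statement.
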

\begin{proof}
  BEach parent of $\mathcal{T}(\graph)$ has either 2 or 0 children.
  If it has zero children, it is a leaf and therefore corresponds to a 1-path, which adds one edges to $\graph$.
  It follows that $E \triangleq |\Edges| = l$.
  
  Each leaf also adds two nodes to $\graph$; each series join identifies a pair of nodes, and each parallel join identifies two pairs of nodes.
  Hence, 
  $
    N \triangleq |\Nodes| = 2l -2p -s.
  $
  A complete binary tree has $n=2l-1$ nodes, and so
  $
    n = N+2p +s -1.
  $
  Furthermore, the number of nodes $n$ of $\mathcal{T}(\graph)$ is bounded by its height by the relation
  $
    2h+1 \leq n \leq 2^{h+1} -1,
  $
  leading to  
  \begin{align}
    \log_2(n+1)-1 \leq h \leq \frac{n-1}{2}.\label{eq:5}
  \end{align}
  Substituting $n$ and $l$ into \eqref{eq:5} yields the result.
  
\end{proof}

\section{System-Theoretic Computations on Series-Parallel Graphs}
\label{sec:syst-theor-comp}

\subsection{Efficient Computations on Series-Parallel Graphs}

We now discuss how series-parallel graphs can be used to simplify computations.
Some quantities are `hard' to compute on large graphs, but they may be easy to compute over small atomic elements, and the quantities may propagate across graph compositions (i.e., series or parallel connections) by known formulae.
Given a decomposition tree, this operation takes $\mathcal{O}(h)$ time.

A natural use for series-parallel graphs is in electrical networks, as resistances (conductances) add over a series (parallel) connection of resistors.
Na\"{i}ve methods for computing resistances over graphs are expensive, such as in~\eqref{eq:9} which has complexity $\mathcal{O}(|\Nodes|^a)$, $a>2$.

Recall that one can construct the graph by moving up the tree in a reverse breadth-first-search order and joining the graphs at each leaf via the operation prescribed by their parent.
Similarly, given a TTSP network of resistors (edges with weights corresponding to conductances) and its decomposition tree, one can start at the leaves (corresponding to 1-paths, i.e., individual resistors), and either add resistances together (if the join is series), or add their reciprocals (if the join is parallel).
By performing this computation up the tree to the root, one yields the effective resistance across the TTSP graph from source to sink.

\begin{figure}
  \centering
  \includegraphics[width=0.8\columnwidth]{./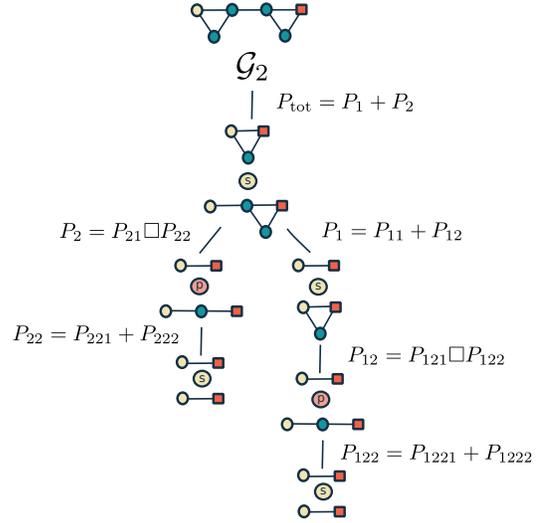}
  \caption{Nested infimal convolution/sum computations over a series-parallel graph. Beige circles (always left) denote sources, red squares (always right) denote sinks at each step.}
  \label{fig:computation}
\end{figure}

In this manner, one can perform additional electrical calculations.
Suppose that  a current is applied across the TTSP graph, from source to sink.
Given measurements of currents across the resistors, it is possible to compute the total power dissipated across the graph.
Over a series connection of resistors $r_1,r_2$, the power dissipated is additive: $P_{s} = P_1 + P_2 = i^2(r_1+r_2)$.
Over a parallel connection of resistors, the total power dissipated is minimized over the resistors, subject to conservation of current: 
\begin{align}
  P_p(i) = 
  \begin{array}{lc}
    \min & i_1^2 r_1 + i_2^2 r_2\\
    \text{s.t.} & i_1+ i_2 = i 
  \end{array}\triangleq P_1 \square P_2.\label{eq:10}
\end{align}
The operation denoted $P_1 \square P_2$ is denoted the \emph{infimal convolution}, and an example of the computation of powers over a TTSP graph is shown in Figure~\ref{fig:computation}.

\subsection{Noise Rejection and Adaptive Weight Design}

It is often the case that one wishes to adapt the network in order to reject noise.
For example, in a swarm of UAVs performing consensus on heading, it is undesirable for the swarm to be influenced by wind gusts.
In cyberphysical systems, noise may be injected by an adversary wishing to disrupt a process on the network.
In this section, we discuss an autonomous protocol that allows networks to quickly adapt their communication edge weights in order to minimize the response to such disturbances.

Consider the leader-follower consensus dynamics setup from \S\ref{sec:problem-statement}.
When considering the performance of all nodes ($C=I$), the $\Htwo$ performance is given by 
$ (\Htwo^{\graph, B})^2 = - {2}^{-1} \Tr (B^T A^{-1} B)
$
~\cite{Chapman2015}.

Consider the task of re-assigning positive weights to the edges of the network to minimize the $\Htwo$ norm.
This can be done via a gradient-descent method solving the problem $\Sigma$:
\begin{align}
\Sigma:= \left\{ \begin{array}{ll}
    \min_W &  f(W) \triangleq \left(\Htwo^{\graph, B}\right)^2 + \dfrac{h}{2}\|W\|^2\\
         & -\frac{1}{2}\left(B^TA(W)^{-1}B\right) + \dfrac{h}{2}\|W\|^2\\
    \text{s.t.} & \mathrm{diag}(W)\in \mathbb{R}_{++}^{|\Edges|}.
  \end{array}\right\}\label{eq:prob}
\end{align}
$\Sigma$  has several key features.
First, note that on the cone of positive-definite matrices, the map $A\to A^{-1}$ is matrix convex.
Second, $A(W)$ is linear in the diagonal matrix $W$ since 
$
  A(W) = - \sum_{e\in\Edges} w_{e}a_{e}a_{e}^T - BB^T.
$
Therefore, the objective function $f(W)$ in Problem~\eqref{eq:prob} is strongly convex.

Following \cite{Chapman2015} and denoting $e:=ij$, note that the derivative of the objective function is given by 
$
 \frac{\partial f(W)}{\partial w_{ij}} =  -\frac{1}{2} \sum_{s\in R} ( y_i^s - y_j^s)^2 + hw_{ij},
$
where the \emph{voltage drop} of node $i$ with respect to source  node $s\in R$ is precisely $y_i^s\triangleq \left[A(W)^{-1}e_s\right]_i$.
Thus, each edge in the network updates its weight according to the dynamics given by the gradient update
\begin{align}
  w_{ij}^{t+1} = \left(1-\frac{1}{\sqrt{t}}\right)w_{ij}^t + \dfrac{1}{2\sqrt{t}} \sum_{s\in R} \left(y_i^s - y_j^s\right)^2.\label{eq:8}
\end{align}
A centralized algorithm and a decentralized conjugate gradient algorithm for solving Problem~\eqref{eq:prob} was presented in \cite{Chapman2015}.
The complexity in solving this problem is the computation of the voltage drops $y_i^s,y_j^s$ for each edge $\{i,j\}\in \Edges$, which takes linear time (in $|\Nodes|$ steps) for each edge.

We now present a decentralized algorithm for computing this quantity on a certain class of two-terminal series-parallel graphs, and a characterization of its complexity.
In particular, for every input node $s\in R$, the graph has to be TTSP with $s$ as the sink, and a node representing the grounded leader set as the source.
\begin{definition}[All-Input TTSP Graphs]
  Consider a graph $\graph$ in the setup of the leader-follower consensus dynamics.
  Identify each leader node $i_1,i_2,\dots,i_r\in \mathcal{R}$ as one node $l$ connected to all source nodes $s\in R$, as depicted in Figure~\ref{fig:leaderFollower}.
  The graph $\graph$ is called an \emph{all-input two-terminal series-parallel graph} if, for all source nodes $s\in R$, $\graph$ is TTSP with $s$ as the source and $l$ as the sink.
\end{definition}
See Figure~\ref{fig:examples} for an  example of an all-input TTSP graph.

Informally, the algorithm is as follows.
For each source node $s$, the algorithm utilizes the decomposition tree of $\graph$ with $s$ as the source and the grounded leader set as the sink; this is why $\graph$ needs to be TTSP with respect to all source nodes.
The voltage drops can be computed from resistances and currents across each join, and the currents can be extracted from the power dissipated across each join.
Hence, the effective resistances are computed first, as in Algorithm~\ref{alg:1}.
Starting from the root of the decomposition tree, the currents at each join can be computed by Eq.~\eqref{eq:10}, as in Algorithm~\ref{alg:2} and depicted in Figure~\ref{fig:computation}.
Finally, the voltage drops over each branch are computed starting from the leaves of the decomposition tree, as seen in Algorithm~\ref{alg:3}.

\removelatexerror
\begin{algorithm}[H]
  \label{alg:1}
  \SetAlgoNoLine
 \caption{Effective Resistance over TTSP Graph}
\SetAlgoLined
\KwIn{$\mathcal{T}(\graph),~\Weights(\graph)$}
\KwResult{ Effective resistances $\rho(\graph)$ over $\mathcal{T}(\graph)$ }
 \For{each leaf of $\mathcal{T}(\graph)$}{
  Output $R_{\text{eff}} = w_e^{-1}$ to parent\;
 }
 \For{each parent $j$ of $\mathcal{T}(\graph)$}{
  \eIf{received $R_{\text{eff}_i}$ from both children $i=1,2$}{
    \eIf{$j$ is a series join}{
       Output $R_{\text{eff}} = R_{\text{eff}_1}+R_{\text{eff}_2} $ to parent\;
    }{
       Output $R_{\text{eff}} = R_{\text{eff}_1}:R_{\text{eff}_2} $ to parent\;
    }
   }{
   wait\;
  }
 }
\end{algorithm}
\vspace{-0.5mm}

\removelatexerror
\begin{algorithm}[H]
  \label{alg:2}
 \caption{Branch Currents over TTSP Graph}
\SetAlgoLined
\KwIn{$\mathcal{T}(\graph),~\rho(\graph)$}
\KwResult{ Currents $\mathcal{I}(\graph)$ over $\mathcal{T}(\graph)$ }
 \For{each parent $j$ of $\mathcal{T}(\graph)$}{
   \uIf{$j$ is root}{
         \eIf{$j$ is a series join}{
           Output $i_{\text{out}} = 1$ to children\;
         }{
           Output $(i_1,i_2) = \arg P_1\square P_2$ to children\;
         }
   }
   \uElseIf{received $i_{\text{in}}$ from parent}{
         \eIf{$j$ is a series join}{
           Output $i_{\text{out}} = i_{\text{in}}$ to children\;
         }{
           Output $(i_1,i_2) = \arg P_1\square P_2$ to children\;
         }
   }
   \Else{
     wait\;
   }
 }

\end{algorithm}

\removelatexerror
\begin{algorithm}[H]
  \label{alg:3}
 \caption{Voltage Drops over TTSP Graph}
\SetAlgoLined
\KwIn{$\mathcal{T}(\graph),~\rho(\graph),~\mathcal{I}(\graph)$}
\KwResult{Voltage drops $y_i^s$ over $\mathcal{T}(\graph)$ }
 \For{each leaf of $\mathcal{T}(\graph)$}{
  Output $v_e = i_ew^{-1}_e$ to parent\;
 }
 \For{each parent $j$ of $\mathcal{T}(\graph)$}{
  \eIf{received $v_{e_i}$ from both children $i=1,2$}{
    \eIf{$j$ is a series join}{
       Output $v_{\text{out}} = v_{\text{in}_1}+v_{\text{in}_2} $ to parent\;
    }{
       Output $v_{\text{out}} = v_{\text{in}_1} = v_{\text{in}_2} $ to parent\;
    }
   }{
   wait\;
  }
 }
\end{algorithm}

\begin{figure} 
  \centering
  \includegraphics[width=0.7\columnwidth]{./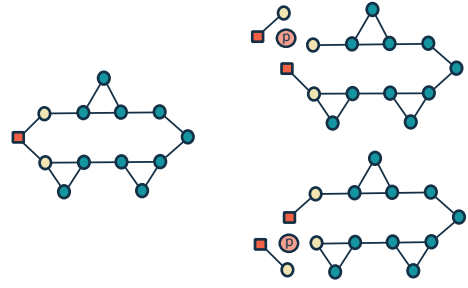}
  \caption{Left: An all-input TTSP graph. Right: Parallel joins across $R$ to $\mathcal{R}$ used to compute $y_s^s$.}
  \label{fig:examples} 
\end{figure}

We have the following result. 
\begin{theorem}
  \label{thm:complex}
  Consider the calculation of the voltage drops $y_i^s$ in the gradient update scheme of \eqref{eq:8}.
  The  best-case and worst-case complexity of this computation is $\mathcal{O}(|\mathcal{R}| \log |\Nodes|)$, and  $\mathcal{O}(|\mathcal{R}||\Nodes|)$, respectively.
\end{theorem}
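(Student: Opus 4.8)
The plan is to reduce the whole gradient step \eqref{eq:8} to a constant number of sweeps of the decomposition tree $\MCT(\graph)$, bound each sweep by the tree height, and then invoke Proposition~\ref{prop:tree}.

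First I would note that, since each leader in $\mathcal{R}$ is attached to a unique input node, the source set $R$ satisfies $|R| = |\mathcal{R}|$; hence it suffices to bound the cost of computing $\{y_i^s\}_i$ for one fixed source $s \in R$ and multiply by $|\mathcal{R}|$. Fix such an $s$. Since $\graph$ is all-input TTSP it admits a decomposition tree $\MCT(\graph)$ with source $s$ and sink the grounded leader node $l$, on which the pipeline Algorithms~\ref{alg:1}--\ref{alg:3} is run. I would then check that this pipeline returns $y_i^s = \left[A(W)^{-1}e_s\right]_i$: Algorithm~\ref{alg:1} propagates effective resistances upward from the leaves (single resistors $w_e^{-1}$), combining children by $\rho_1+\rho_2$ on series joins and by the parallel addition $\rho_1 : \rho_2$ on parallel joins, so the root holds the effective resistance from $s$ to $\mathcal{R}$, i.e. $[A(W)^{-1}]_{ss}$; Algorithm~\ref{alg:2} pushes the injected unit current downward, unchanged through series joins and split at each parallel join by $\arg P_1 \square P_2$ from \eqref{eq:10}, which is precisely the current division minimizing dissipated power subject to current conservation and hence the physical branch currents $i_e$; Algorithm~\ref{alg:3} then reconstructs voltages upward using Ohm's law $v_e = i_e w_e^{-1}$ at the leaves, additivity across series joins, and equality across parallel joins, yielding $y_i^s$ as the accumulated voltage drop from node $i$ to the sink.

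Next I would bound the running time. Each of the three algorithms is a single pass over $\MCT(\graph)$ in which every tree node performs $\mathcal{O}(1)$ arithmetic — an addition, a parallel addition, or the two-variable infimal convolution \eqref{eq:10}, which has a closed form — and the passes proceed layer by layer with all operations within a layer mutually independent; so each pass runs in time proportional to the height $h$ of $\MCT(\graph)$, and the three passes together cost $\mathcal{O}(h)$ per source, i.e. $\mathcal{O}(|\mathcal{R}|\,h)$ for \eqref{eq:8}. By Proposition~\ref{prop:tree}, $\log_2 E \le h \le E-1$ with $E = |\Edges|$, and since $|\Nodes| = 2l - 2p - s \le 2l = 2E$ one has $E \ge |\Nodes|/2$, so for the (sparse) graphs at hand, where $E = \mathcal{O}(|\Nodes|)$, $\log_2 E = \Theta(\log|\Nodes|)$ while $E-1 = \mathcal{O}(|\Nodes|)$. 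When $\MCT(\graph)$ is balanced the lower bound is tight, $h = \Theta(\log|\Nodes|)$, giving the best case $\mathcal{O}(|\mathcal{R}|\log|\Nodes|)$; when $\MCT(\graph)$ degenerates into a maximally unbalanced (caterpillar) tree — for instance for a path assembled by repeated series addition of single edges — the upper bound is tight, $h = \Theta(|\Nodes|)$, giving the worst case $\mathcal{O}(|\mathcal{R}|\,|\Nodes|)$.

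I expect the main obstacle to be the correctness of Algorithm~\ref{alg:2}: one must justify that resolving current at each parallel join locally by the $\arg\min$ in \eqref{eq:10} reproduces the true branch currents of the whole network — that is, that these local minimum-dissipation choices compose into the global minimum-dissipation (harmonic) current — and, relatedly, that the recursion of Algorithm~\ref{alg:3} recovers $y_i^s$ for every follower $i$ rather than only the source-to-sink voltage. Once the per-source cost is pinned to $\Theta(h)$, the remaining complexity bookkeeping is routine; the only subtlety there is converting the height bounds \eqref{eq:3} and \eqref{eq:7}, which are stated in terms of $|\Edges|$, $p$ and $s$, into the stated bounds in $|\Nodes|$, which uses $|\Nodes| \le 2|\Edges|$ together with sparsity of the graph class.
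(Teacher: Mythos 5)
Your proposal is correct and follows essentially the same route as the paper: each of Algorithms~\ref{alg:1}--\ref{alg:3} is a single layer-parallel pass over the decomposition tree, costing $\mathcal{O}(h)$ per source, with $h$ bounded via Proposition~\ref{prop:tree} and the whole pipeline invoked $|\mathcal{R}|$ times. The paper's own proof is just this argument stated in three sentences; your additional discussion of the correctness of the current-splitting in Algorithm~\ref{alg:2} and the $E$-versus-$|\Nodes|$ bookkeeping fills in details the paper leaves implicit but does not change the approach.
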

\begin{proof}
 Algorithms 1,2 and 3 are computations done on a binary tree of height $h$, with each layer's computation done in parallel.
 Hence, by Proposition~\ref{prop:tree}, the complexity is $\mathcal{O}( \log |\Nodes|)$, and  $\mathcal{O}(|\Nodes|)$ for best and worst-case, respectively.
 The 3 algorithms are called $|\mathcal{R}|$ times.
\end{proof}

\subsection{Synthesis of $\Htwo$-Optimal Networks}

The algorithm described above can also be used to compute the $\Htwo$ norm.
First, let's examine the structure of the $\Htwo$ norm in the context of the leader-follower consensus setup.
\begin{lemma}
\label{lem:4}
  Consider the leader-follower consensus setup from \S\ref{sec:problem-statement}, with $C=I$.
  Then, 
  \begin{align}
    \left(\Htwo^{\graph, B}\right)^2 &= - \dfrac{1}{2} \sum_{s\in R} \left[ A^{-1} e_s \right]_s = -\dfrac{1}{2}\sum_{s\in R} y_s^s = \dfrac{1}{2}\sum_{s\in R} \rho^{\text{eff}}_{s,\mathcal{R}},
  \end{align}
where $\rho^{\text{eff}}_{s,\mathcal{R}}$ denotes the effective resistance from $s$ to the grounded leader node set $\mathcal{R}$.
\end{lemma}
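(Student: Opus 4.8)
The plan is to reduce everything to the already-established closed form for the $\Htwo$ norm and then read off the three claimed expressions by unwinding definitions. Concretely, I would start from the identity recalled just above (from \cite{Chapman2015}): with $C=I$,
$\left(\Htwo^{\graph,B}\right)^2 = -\tfrac12\,\Tr\!\left(B^T A^{-1} B\right)$.

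Next I would substitute the explicit structure $B^T = [e_{i_1}\ \cdots\ e_{i_m}]$ with $R=\{i_1,\dots,i_m\}$. Then $B^T A^{-1} B$ is precisely the principal submatrix of $A^{-1}$ on the index set $R$, whose $(a,b)$ entry is $e_{i_a}^T A^{-1} e_{i_b}$; taking the trace retains only the diagonal, so $\Tr(B^T A^{-1} B) = \sum_{s\in R} e_s^T A^{-1} e_s = \sum_{s\in R}[A^{-1}e_s]_s$. This gives the first equality. The second equality is immediate from the definition $y_s^s \triangleq [A^{-1}e_s]_s$ introduced in the electrical-network subsection.

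For the last equality I would invoke the electrical interpretation of $A$. Since $A = -\bigl(\mathcal{L}_{\graph(\Nodes\setminus\mathcal{R})} + \sum_{i\in R}e_ie_i^T\bigr)$ is the negated grounded Laplacian obtained by identifying all leaders $r\in\mathcal{R}$ into a single grounded node $l$, the matrix $-A$ is positive definite and $[(-A)^{-1}]_{ss}$ equals the effective resistance $\rho^{\text{eff}}_{s,\mathcal{R}}$ from $s$ to the grounded leader set --- this is exactly the fact recalled in \S\ref{sec:graph-compositions}. If one wants a self-contained derivation, it follows by writing $-A$ as the Schur complement of $\lap_\graph$ with respect to the block indexed by $\mathcal{R}$ after the identification $\mathcal{R}\mapsto l$, and then applying the effective-resistance formula \eqref{eq:9} at the node $l$. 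In either case, $y_s^s = [A^{-1}e_s]_s = -[(-A)^{-1}]_{ss} = -\rho^{\text{eff}}_{s,\mathcal{R}}$, so $-\tfrac12\sum_{s\in R} y_s^s = \tfrac12\sum_{s\in R}\rho^{\text{eff}}_{s,\mathcal{R}}$, which closes the chain of equalities.

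The computation itself is routine; the only step I would treat with care --- and which I regard as the main (and only real) obstacle --- is the sign bookkeeping together with the identification $[(-A)^{-1}]_{ss}=\rho^{\text{eff}}_{s,\mathcal{R}}$: one must confirm that grounding the leader set produces exactly the matrix $-A$ from \eqref{eq:11}, and that its inverse diagonal entries are effective resistances \emph{to ground}, so that the negative-definiteness of $A$ precisely converts the $-\tfrac12$ in the first expression into the $+\tfrac12$ in the last.
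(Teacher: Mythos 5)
Your proposal is correct and follows essentially the same route as the paper's proof: expand $-\tfrac12\Tr(B^TA^{-1}B)$ into the diagonal entries $[A^{-1}e_s]_s$ over $s\in R$, identify these with the voltage drops $y_s^s$ by definition, and then with $-\rho^{\text{eff}}_{s,\mathcal{R}}$ via the electrical interpretation of the grounded Laplacian. Your explicit attention to the sign bookkeeping (that it is $(-A)^{-1}$, not $A^{-1}$, whose diagonal gives the effective resistances) is in fact more careful than the paper's own prose in \S\ref{sec:graph-compositions}, which elides this sign.
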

\begin{proof}
  A calculation suffices: 
  \begin{align}
&\big(\Htwo^{\graph, B}\big)^2 = - \dfrac{1}{2} \Tr\big(B^T A^{-1} B\big) = - \dfrac{1}{2} \Tr\big(\sum_{i,s \in R} e_i A^{-1}_{is} e_s^T \big)\\
                  &=  - \dfrac{1}{2} \sum_{s\in R} \left[ A^{-1} e_s \right]_s
                  = - \dfrac{1}{2}\sum_{s\in R} y_s^s = \dfrac{1}{2}\sum_{s\in R} \rho^{\text{eff}}_{s,\mathcal{R}}\label{eq:13}.
  \end{align}
\end{proof}
Each quantity $y_s^s$ of the left side of the sum in~\eqref{eq:13} is the voltage drop from source node $s\in R$ to the grounded leader node set $\mathcal{R}$.
This is precisely the voltage dropped over the last parallel join of the series-parallel decomposition of an all-input TTSP graph; depicted in Figure~\ref{fig:examples}.
We can utilize this  observation to efficiently compute the $\Htwo$ norm \emph{a priori} knowing only the weights of the edges and the decomposition tree of $\graph$.

We use the following setup.
Consider a TTSP graph $\graph$ with source node $s$ and sink node $t$.
Ground the source node $s$, and consider the grounded Laplacian $A$ with respect to the grounded source $s$.
This is a leader-follower system with a single leader.

Note that the parallel join depicted in Figure~\ref{fig:examples} makes one of the terminals of the resulting graph an element of $\mathcal{R}$, and the other terminal (the sink) an element of $R$.
The control matrix of the leader-follower consensus problem corresponds to exactly those elements in $R$, which is the `sink' of the TTSP graph used in that computation.
Therefore, our choice of $B$ selects the sink vector $t$; hence $B = e_t$.

We now proceed in two steps.
First, we need a lemma that effectively says that for an arbitrary TTSP graph, there exists an equivalent 1-path TTSP graph with the same effective resistance.
Then, any composition rule on arbitrary TTSP graphs can be reduced to a composition on the equivalent 1-paths, simplifying analysis.
Afterward, we discuss a series-parallel calculation of the $\Htwo$ performance.

\begin{lemma}
  \label{lem:5}
  Consider two graphs: an arbitrary TTSP graph $\graph_1$ with source $s_1$ and sink $t_1$ with effective resistance $\rho^{\text{eff}}_{s_1,t_1}$, and a 1-path TTSP graph $\graph_2$ with source $s_2$ and sink $t_2$ with effective resistance $\rho^{\text{eff}}_{s_2,t_2}$.
  Let their respective control matrices be $B_1 = e_{t_1}$ and $B_2=e_{t_2}$.
  Further suppose that 
$
    \rho^{\text{eff}}_{s_1,t_1}=\rho^{\text{eff}}_{s_2,t_2}.
$
  Then, 
$
    (\Htwo^1)^2 = (\Htwo^2)^2.
$
\end{lemma}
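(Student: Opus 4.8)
The plan is to derive the statement as an essentially immediate corollary of Lemma~\ref{lem:4}, which already expresses the squared $\Htwo$ norm of a grounded leader-follower system entirely in terms of effective resistances. The key observation is the dictionary set up in the paragraphs preceding the lemma: grounding the source $s_i$ of $\graph_i$ turns $\graph_i$ into a leader-follower consensus system whose single leader is (identified with) $s_i$ and whose single input node is the sink $t_i$, so that $\mathcal{R}_i = \{s_i\}$, $R_i = \{t_i\}$, and the control matrix is $B_i = e_{t_i}$ as assumed in the hypothesis. The matrix $A_i$ is then the corresponding grounded Laplacian, which is negative definite and invertible since each $\graph_i$ is connected.

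With that identification in hand, I would first apply Lemma~\ref{lem:4} (with $C = I$) to $\graph_1$: since $R_1 = \{t_1\}$ and the grounded leader set is $\{s_1\}$, the sum collapses to one term and
\begin{align}
  (\Htwo^1)^2 = \tfrac{1}{2}\sum_{s\in R_1}\rho^{\text{eff}}_{s,\mathcal{R}_1} = \tfrac{1}{2}\,\rho^{\text{eff}}_{t_1,s_1} = \tfrac{1}{2}\,\rho^{\text{eff}}_{s_1,t_1},
\end{align}
using symmetry of effective resistance, which is immediate from~\eqref{eq:9}. I would then repeat the identical argument for the 1-path $\graph_2$, whose only follower is $t_2$ (so $C = I$ is the scalar $1$), obtaining $(\Htwo^2)^2 = \tfrac{1}{2}\,\rho^{\text{eff}}_{s_2,t_2}$. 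Finally, the hypothesis $\rho^{\text{eff}}_{s_1,t_1} = \rho^{\text{eff}}_{s_2,t_2}$ yields $(\Htwo^1)^2 = (\Htwo^2)^2$ directly.

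The only point requiring genuine care — the main obstacle, such as it is — is the bookkeeping that matches the abstract partition in~\eqref{eq:4} to the grounded TTSP picture: one must verify that grounding $s_i$ is precisely the ``identify the leader set to a single node'' operation, that the sink $t_i$ legitimately plays the role of the unique follower attached to that leader (so that $B_i = e_{t_i}$ is a valid control matrix in the sense of Lemma~\ref{lem:4}), and that the effective resistance ``$s$ to $\mathcal{R}$'' appearing in Lemma~\ref{lem:4} coincides with $\rho^{\text{eff}}_{s_i,t_i}$ when $\mathcal{R} = \{s_i\}$. Once this dictionary is in place there is nothing left to prove: both $\Htwo$ norms depend on their graphs only through the single scalar $\rho^{\text{eff}}_{s_i,t_i}$.
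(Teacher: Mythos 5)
Your proposal is correct and follows essentially the same route as the paper: both invoke Lemma~\ref{lem:4} to write $(\Htwo^i)^2 = \tfrac{1}{2}\rho^{\text{eff}}_{s_i,t_i}$ for each grounded graph and then conclude from the hypothesis that the two effective resistances agree. Your version is slightly more careful about the bookkeeping (the identification $\mathcal{R}_i=\{s_i\}$, $R_i=\{t_i\}$ and the symmetry of effective resistance), but the substance is identical.
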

\begin{proof}
  The setup of the graphs is a leader-follower consensus with grounded (leader) nodes $s_1,s_2$.
  Therefore, we can invoke Lemma~\ref{lem:4}.
  Using Lemma~\ref{lem:4}, denoting the graphs' respective Dirichlet Laplacians as $A_1,A_2$ we can compute: 
  \begin{align}
     & (\Htwo^1)^2 
                 = \frac{1}{2}\Tr\left[e_{t_1}^T\left[A_1\right]^{-1}e_{t_1}\right]
                 = \frac{1}{2} \rho^{\text{eff}}_{s_1,t_1}     \\           &= \frac{1}{2} \rho^{\text{eff}}_{s_2,t_2}
                 = \frac{1}{2}\Tr\left[e_{t_2}^T\left[A_2\right]^{-1}e_{t_2}\right]                =  (\Htwo^2)^2.
  \end{align}
\end{proof}
Lemma~\ref{lem:5} will allow us to reduce the computation of the $\Htwo$ norms of a composite TTSP graph to the computation of $\Htwo$ norms of an equivalent 1-path. 
This allows us to prove the following theorem.
\begin{theorem}
  \label{thr:3}
  Consider two graphs: an arbitrary TTSP graph $\graph_1$ with source $s_1$ and sink $t_1$, and a second arbitrary TTSP graph $\graph_2$ with source $s_2$ and sink $t_2$. 
  Let the Dirichlet Laplacian of $\graph_i$ grounded with respect to its source $s_i$ be $A_i$, and its control matrix be $B_i = e_{t_i}$.
  Hence its $\Htwo$ norm is given by 
 $
    (\Htwo^{\graph_i})^2 = -\frac{1}{2}\Tr[B_i^T \left[A_i\right]^{-1} B_i ].
 $
  Then, 
  \begin{align}
    \left(\Htwo^{\graph_1\odot\graph_2}\right)^2   &= \left(\Htwo^{\graph_1}\right)^2 + \left(\Htwo^{\graph_2}\right)^2\label{eq:15} \\
    \left(\Htwo^{\graph_1\oslash\graph_2}\right)^2 &= \left(\Htwo^{\graph_1}\right)^2 :\left(\Htwo^{\graph_2}\right)^2 .\label{eq:14}
  \end{align}
\end{theorem}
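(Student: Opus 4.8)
The plan is to turn both identities into effective-resistance identities via Lemmas~\ref{lem:4} and~\ref{lem:5}, and then verify them on the trivial case of $1$-paths. First I would note that each of $\graph_1$, $\graph_2$, $\graph_1\odot\graph_2$ and $\graph_1\oslash\graph_2$ is itself a TTSP graph, and that grounding its source turns it into a leader-follower consensus network with a single leader and control matrix equal to the indicator of its sink. Hence Lemma~\ref{lem:4} applies verbatim to each and yields $(\Htwo^{\graph_i})^2=\tfrac12\rho^{\text{eff}}_{s_i,t_i}$, $(\Htwo^{\graph_1\odot\graph_2})^2=\tfrac12\rho^{\text{eff}}_{s_1,t_2}$ (effective resistance computed in $\graph_1\odot\graph_2$), and $(\Htwo^{\graph_1\oslash\graph_2})^2=\tfrac12\rho^{\text{eff}}_{s,t}$ (computed in $\graph_1\oslash\graph_2$, with $s=s_1\sim s_2$ and $t=t_1\sim t_2$). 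This reduces the theorem to the two claims $\rho^{\text{eff}}_{s_1,t_2}=\rho^{\text{eff}}_{s_1,t_1}+\rho^{\text{eff}}_{s_2,t_2}$ and $\rho^{\text{eff}}_{s,t}=\rho^{\text{eff}}_{s_1,t_1}:\rho^{\text{eff}}_{s_2,t_2}$, plus a small scalar manipulation for the parallel case.

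Next I would use Lemma~\ref{lem:5} to replace $\graph_1$ and $\graph_2$ by equivalent $1$-paths of resistances $\rho_1:=\rho^{\text{eff}}_{s_1,t_1}$ and $\rho_2:=\rho^{\text{eff}}_{s_2,t_2}$. Because the effective resistance between two nodes of a resistor network is unchanged when a two-terminal subnetwork is replaced by a single resistor of the same effective resistance, the quantities $\rho^{\text{eff}}_{s_1,t_2}$ and $\rho^{\text{eff}}_{s,t}$ of the composites are unaffected by this substitution. After it, $\graph_1\odot\graph_2$ is simply a path of two resistors $\rho_1,\rho_2$ in series, so $\rho^{\text{eff}}_{s_1,t_2}=\rho_1+\rho_2$, and $\graph_1\oslash\graph_2$ is two resistors $\rho_1,\rho_2$ in parallel, so $\rho^{\text{eff}}_{s,t}=\rho_1:\rho_2$. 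Substituting back, $(\Htwo^{\graph_1\odot\graph_2})^2=\tfrac12(\rho_1+\rho_2)=\tfrac12\rho_1+\tfrac12\rho_2=(\Htwo^{\graph_1})^2+(\Htwo^{\graph_2})^2$, which is~\eqref{eq:15}.

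For~\eqref{eq:14} it then remains to observe the elementary identity $\tfrac12(\rho_1:\rho_2)=(\tfrac12\rho_1):(\tfrac12\rho_2)$, obtained by writing $x:y=(x^{-1}+y^{-1})^{-1}$ and pulling out the common scalar factor; combined with the previous paragraph this gives $(\Htwo^{\graph_1\oslash\graph_2})^2=\tfrac12(\rho_1:\rho_2)=(\tfrac12\rho_1):(\tfrac12\rho_2)=(\Htwo^{\graph_1})^2:(\Htwo^{\graph_2})^2$.

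The step I expect to require the most care is the substitution/locality principle invoked above: that replacing each $\graph_i$ by its equivalent $1$-path leaves the effective resistance of the composite unchanged. Concretely one must verify cut-vertex additivity for the series join — all current injected at $s_1$ and extracted at $t_2$ must pass through the shared node $v=t_1\sim s_2$, so the potential drops sustained across $\graph_1$ and across $\graph_2$ simply add — and conductance additivity for the parallel join — since $\graph_1$ and $\graph_2$ meet only at $s$ and $t$, each carries a current fixed by the terminal voltage alone. Both facts follow directly from the harmonic characterization of the node potentials $A^{-1}e_s$, or alternatively from a Schur-complement computation on the block form that the Dirichlet Laplacian takes under the two join operations; either way, this is the technical heart of the argument, while everything else is bookkeeping with Lemmas~\ref{lem:4}--\ref{lem:5} and the algebra of parallel addition.
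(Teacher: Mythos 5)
Your proposal is correct and follows essentially the same route as the paper: reduce each $\graph_i$ to an equivalent $1$-path via Lemma~\ref{lem:5} and then verify the two formulas on series and parallel joins of $1$-paths (the paper does the latter by explicitly inverting the $1\times 1$ and $2\times 2$ Dirichlet Laplacians of the joined $1$-paths, which is the same computation as your resistor algebra, including the scalar identity $\tfrac12(\rho_1:\rho_2)=(\tfrac12\rho_1):(\tfrac12\rho_2)$). If anything you are slightly more careful than the paper, which asserts that it suffices to check the formulas on $1$-paths without justifying that substituting each $\graph_i$ by its equivalent $1$-path preserves the composite's source-to-sink effective resistance --- the locality step you correctly single out as the technical heart.
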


\begin{figure} 
    \centering
  \subfloat[Series join of 1-paths]{%
       \includegraphics[width=0.4\columnwidth]{./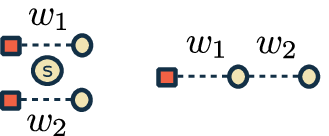}}
    \label{join1}~~
  \subfloat[Parallel join of 1-paths]{%
        \includegraphics[width=0.4\columnwidth]{./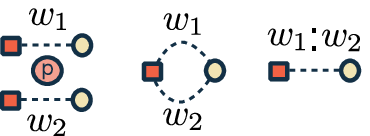}}
    \label{join2}
  \caption{Series and parallel joins of 1-paths with arbitrary weights}
  \label{fig:simplejoin}
\end{figure}

\begin{proof}
  By Lemma~\ref{lem:4}, the $\Htwo$ norm of an arbitrary graph can be computed from an equivalent 1-path.
  Hence, we need to show~\eqref{eq:15} and~\eqref{eq:14} for series and parallel joins of 1-paths.
  
  Consider the 1-paths in Figure~\ref{fig:simplejoin}, with weights $w_1$ and $w_2$ between the sources (square nodes) and sinks (circular nodes).
  The Dirichlet Laplacians of both with respect to the grounded source nodes are simply 
$
    \lap_{\graph_1} = \left[ w_1 \right],~\lap_{\graph_2} = \left[ w_2 \right],
$
  and their respective control matrices are $B_{\graph_1}= B_{\graph_2} = 1$
  and $\Htwo$ norms are 
 $
    (\Htwo^{\graph_1})^2 = \frac{1}{2}w_1^{-1}$, $(\Htwo^{\graph_2})^2 = \frac{1}{2} w_2^{-1}.
 $

  Similarly, the Laplacians of the series and parallel joins in Figure~\ref{fig:simplejoin} are,
  \begin{align}
     \lap_{\graph_1 \oslash\graph_2} = \left[ w_1+w_2\right] ,~\lap_{\graph_1 \odot\graph_2} = 
    \begin{bmatrix}
      w_1+w_2 & -w_2\\
      -w_2 & w_2
    \end{bmatrix}.
  \end{align}
  The control matrices are 
$
    B_{\graph_1 \oslash\graph_2} = 1$, $B_{\graph_1 \odot\graph_2} = 
[
      0 ~ 1
].
$
  Therefore, the $\Htwo$ norm in the series join case is 
  \begin{align}
    \left(\Htwo^{\graph_1\odot\graph_2}\right)^2   &= \frac{1}{2} \Tr\left[ 
                                                     \begin{bmatrix}
                                                       0 & 1 
                                                     \end{bmatrix}
                                                           \begin{bmatrix}
                                                             w_1+w_2 & -w_2\\
                                                             -w_2 & w_2      
                                                           \end{bmatrix}^{-1}
                                                     \begin{bmatrix}
                                                       0 \\ 1 
                                                     \end{bmatrix}\right]\\
                                                   &= \frac{1}{2}\left[ \frac{1}{w_1} + \frac{1}{w_2} \right]
                                                   = \left(\Htwo^{\graph_1}\right)^2 + \left(\Htwo^{\graph_2}\right)^2.
  \end{align}
  Similarly, the $\Htwo$ norm in the parallel join case is 
  \begin{align}
    &\left(\Htwo^{\graph_1\oslash\graph_2}\right)^2 = \frac{1}{2}\Tr\left[(w_1+w_2)^{-1}\right] = \left(\Htwo^{\graph_1}\right)^2 :\left(\Htwo^{\graph_2}\right)^2. 
  \end{align}
\end{proof}
We now propose the following Algorithm~\ref{alg:4} for computing the $\Htwo$ norm of a TTSP graph with control matrix $e_s$.
\removelatexerror
\begin{algorithm}[H]
  \label{alg:4}
  \SetAlgoNoLine
 \caption{$\Htwo$ norm of TTSP Graph with $B=e_i$}
\SetAlgoLined
\KwIn{Decomposition tree $\mathcal{T}(\graph)$, weights $\Weights(\graph)$, $e_i$}
\KwResult{ $(\Htwo^{\graph})^2$ }
 \For{each leaf ${\mathcal{L}}$ of $\mathcal{T}(\graph)$}{
  Output $(\Htwo^{\mathcal{L}})^2 = \frac{1}{2}w_{\mathcal{L}}^{-1}$ to parent\;
 }
 \For{each parent $j$ of $\mathcal{T}(\graph)$}{
  \eIf{received $\Htwo^{\graph_i}$ from both children}{
    \eIf{$j$ is a series join}{
       Output $(\Htwo)^2 = \left(\Htwo^{\graph_1}\right)^2 + \left(\Htwo^{\graph_2}\right)^2 $ to parent\;
    }{
       Output $(\Htwo)^2 = \left(\Htwo^{\graph_1}\right)^2 :\left(\Htwo^{\graph_2}\right)^2 $ to parent\;
    }
   }{
   wait\;
  }
 }
\Return{$(\Htwo)^2$ at root node of $\mathcal{T}(\graph)$.}
\end{algorithm}

We now proceed to the final result: 
\begin{theorem}
Consider a leader-follower consensus network on an all-input TTSP graph $\graph$.
Then, the $\Htwo$ norm is given by 
$
  (\Htwo^{\graph, B})^2 = \sum_{s\in \mathcal{R}}  (\Htwo^{\graph, e_s})^2,
$  
and the best-case complexity of computing this $\Htwo$ norm is $\mathcal{O}(|\mathcal{R}| \log |\Nodes|)$, and the worst-case complexity is $\mathcal{O}(|\mathcal{R}||\Nodes|)$.
\end{theorem}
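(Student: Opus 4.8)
The plan is to treat the statement in two halves: first the decomposition identity for the $\Htwo$ norm, then the complexity bound. For the identity, I would start from the closed form $(\Htwo^{\graph, B})^2 = -\tfrac{1}{2}\Tr(B^T A^{-1} B)$ used throughout \S\ref{sec:syst-theor-comp}, together with the fact from \S\ref{sec:problem-statement} that the columns of $B$ are exactly the standard basis vectors $\{e_s : s\in R\}$ indexing the input nodes. Then $\Tr(B^T A^{-1} B) = \sum_{s\in R} e_s^T A^{-1} e_s = \sum_{s\in R}[A^{-1}]_{ss}$, and since $e_s^T A^{-1} e_s = -2(\Htwo^{\graph, e_s})^2$ for each $s$, linearity of the trace yields $(\Htwo^{\graph, B})^2 = \sum_{s\in R}(\Htwo^{\graph, e_s})^2$. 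Because every leader in $\mathcal{R}$ is attached to a unique input node, the map $\mathcal{R}\to R$ is a bijection, so the sum may be re-indexed over $\mathcal{R}$ as in the statement (identifying each leader with its unique input node).

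For the computational claim I would show that each term $(\Htwo^{\graph, e_s})^2$ is produced by Algorithm~\ref{alg:4}. Grounding the leader set $\mathcal{R}$ into the single node $l$, Lemma~\ref{lem:4} specialized to the single-input system with $B=e_s$ gives $(\Htwo^{\graph, e_s})^2 = \tfrac{1}{2}\rho^{\text{eff}}_{s,\mathcal{R}} = \tfrac{1}{2}\rho^{\text{eff}}_{s,l}$, i.e., half the source-to-sink effective resistance of the grounded circuit. The all-input hypothesis is exactly what guarantees that, for every $s\in R$, this circuit is TTSP with terminals $s$ and $l$, hence admits a decomposition tree $\mathcal{T}_s(\graph)$. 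Theorem~\ref{thr:3} (via Lemma~\ref{lem:5}) supplies the composition rules used by Algorithm~\ref{alg:4}: a leaf carrying a $1$-path of weight $w$ contributes $\tfrac{1}{2}w^{-1}$, a series join adds the two incoming values, and a parallel join combines them by parallel addition. A bottom-up induction over $\mathcal{T}_s(\graph)$ then shows that the value returned at the root is exactly $(\Htwo^{\graph, e_s})^2$.

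The complexity part is then identical in form to the proof of Theorem~\ref{thm:complex}: Algorithm~\ref{alg:4} performs one arithmetic operation at each node of $\mathcal{T}_s(\graph)$ and the nodes of any fixed layer can be processed in parallel, so it terminates in $\mathcal{O}(h_s)$ rounds, where $h_s$ is the height of $\mathcal{T}_s(\graph)$. By Proposition~\ref{prop:tree}, $h_s = \mathcal{O}(\log|\Nodes|)$ in the best (balanced-tree) case and $h_s = \mathcal{O}(|\Nodes|)$ in the worst (path-like) case. Running Algorithm~\ref{alg:4} once per input node and summing the $|R| = |\mathcal{R}|$ outputs gives total complexity $\mathcal{O}(|\mathcal{R}|\log|\Nodes|)$ and $\mathcal{O}(|\mathcal{R}||\Nodes|)$, respectively.

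The trace manipulation and the induction are routine and follow directly from results already proved, so I expect the only real subtlety to be the reduction in the second paragraph: one must be careful that ``$(\Htwo^{\graph, e_s})^2$'' denotes the single-input system built from the \emph{same} grounded Laplacian $A$ (so that the first half applies verbatim) while at the same time equalling the half source-to-sink effective resistance of the TTSP circuit obtained by identifying all of $\mathcal{R}$ to $l$ (so that Theorem~\ref{thr:3} and Algorithm~\ref{alg:4} apply). Reconciling these two viewpoints is precisely where the all-input hypothesis does its work, and it is the one step of the argument that is not purely mechanical.
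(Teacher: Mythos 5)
Your proposal is correct and follows essentially the same route as the paper: expand $\Tr(B^TA^{-1}B)$ by linearity over the columns $e_s$ of $B$ to obtain the sum of single-input $\Htwo$ norms, then bound the cost of each per-source computation by the height of the decomposition tree exactly as in Theorem~\ref{thm:complex}. The paper's version is considerably terser (it does not spell out the correctness induction for Algorithm~\ref{alg:4} or the $R\leftrightarrow\mathcal{R}$ re-indexing), but these are elaborations of, not departures from, its argument.
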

\begin{proof}
  We can compute: 
  \begin{align}
    \left(\Htwo^{\graph, B}\right)^2  = - \frac{1}{2} \sum_{s\in\mathcal{R}} \Tr\left[ e_s^T A^{-1} e_s \right] = \sum_{s\in\mathcal{R}} \left[ \Htwo^{\graph_2,e_s} \right]^2.\label{eq:h2}
  \end{align}
  This is a sum of $|\mathcal{R}|$ $\Htwo$ norms of leader-follower consensus networks with control input $e_s$.
  Since at each layer of the decomposition tree $\mathcal{T}(\graph)$ each computation happens independently, the complexity depends on the height of the decomposition tree.
  The remainder of the proof is identical to that of Theorem~\ref{thm:complex}.
\end{proof}

\section{Conclusion}
\label{sec:conclusion}
In this paper, we used two-terminal series-parallel graphs for efficiently computing the $\Htwo$ performance for leader-follower consensus networks.
In particular, we computed the $\Htwo$ norm, as well as gradient updates to adapt the network for optimal $\Htwo$ performance, in best-case and worst-case complexity of $\mathcal{O}(|\mathcal{R}| \log |\Nodes|)$ and $\mathcal{O}(|\mathcal{R}||\Nodes|)$ respectively.

The authors would like to thank Airlie Chapman for many discussions on weight update schemes for leader-follower networks.



\begin{thebibliography}{10}
\providecommand{\url}[1]{#1}
\csname url@samestyle\endcsname
\providecommand{\newblock}{\relax}
\providecommand{\bibinfo}[2]{#2}
\providecommand{\BIBentrySTDinterwordspacing}{\spaceskip=0pt\relax}
\providecommand{\BIBentryALTinterwordstretchfactor}{4}
\providecommand{\BIBentryALTinterwordspacing}{\spaceskip=\fontdimen2\font plus
\BIBentryALTinterwordstretchfactor\fontdimen3\font minus
  \fontdimen4\font\relax}
\providecommand{\BIBforeignlanguage}[2]{{%
\expandafter\ifx\csname l@#1\endcsname\relax
\typeout{** WARNING: IEEEtran.bst: No hyphenation pattern has been}%
\typeout{** loaded for the language `#1'. Using the pattern for}%
\typeout{** the default language instead.}%
\else
\language=\csname l@#1\endcsname
\fi
#2}}
\providecommand{\BIBdecl}{\relax}
\BIBdecl

\bibitem{Canizo2011}
J.~A. Canizo, J.~A. Carrillo, and J.~Rosado, ``{A well-posedness theory in
  measures for some kinetic models of collective motion},'' \emph{Mathematical
  Models and Methods in Applied Sciences}, vol.~21, no.~03, pp. 515--539, 2011.

\bibitem{Hudobadebadyn2018}
M.~Hudoba~de Badyn, U.~Eren, B.~A{\c{c}}ıkme{\c{s}}e, and M.~Mesbahi,
  ``{Optimal mass transport and kernel density estimation for state-dependent
  networked dynamic systems},'' in \emph{Proc. 57th IEEE Conference on Decision
  and Control}, Miami Beach, USA, 2018.

\bibitem{Matheny2019}
M.~H. Matheny, J.~Emenheiser, W.~Fon, A.~Chapman, M.~Rohden, A.~Salova, J.~Li,
  M.~Hudoba De~Badyn, L.~Duenas-Osorio, M.~Mesbahi, J.~P. Crutchfield, M.~C.
  Cross, R.~M. D'Souza, and M.~L. Roukes, ``{Exotic states in a simple network
  of nanoelectromechanical oscillators},'' \emph{Science}, vol. 363, no. 6431,
  pp. 1--10, 2019.

\bibitem{Hegselmann2002}
R.~Hegselmann and U.~Krause, ``{Opinion dynamics and bounded confidence models,
  analysis and simulation},'' \emph{Journal of Artificial Societies and Social
  Simulation}, vol.~5, no.~3, 2002.

\bibitem{Alemzadeh2018}
S.~Alemzadeh and M.~Mesbahi, ``{Influence models on layered uncertain networks:
  A guaranteed-cost design perspective},'' in \emph{Proc. 57th IEEE Conference
  on Decision and Control}, 2018.

\bibitem{Olfati-Saber2005}
R.~Olfati-Saber, ``{Distributed Kalman filter with embedded consensus
  filters},'' in \emph{Proceedings of the 44th IEEE Conference on Decision and
  Control and the European Control Conference}, Seville, Spain, 2005, pp.
  8179--8184.

\bibitem{HudobadebadynFillt2017}
M.~Hudoba~de Badyn and M.~Mesbahi, ``{Large-scale distributed Kalman filtering
  via an optimization approach},'' \emph{IFAC-PapersOnLine}, vol.~50, no.~1,
  pp. 10\,742--10\,747, 2017.

\bibitem{Chen2013a}
Y.~Chen, J.~Lu, X.~Yu, and D.~J. Hill, ``{Multi-agent systems with dynamical
  topologies: Consensus and applications},'' \emph{IEEE Circuits and Systems
  Magazine}, vol.~13, no.~3, pp. 21--34, 2013.

\bibitem{Joordens2009}
M.~A. Joordens and M.~Jamshidi, ``{Underwater swarm robotics consensus
  control},'' in \emph{Proc. IEEE International Conference on Systems, Man and
  Cybernetics}, no. October, San Antonio, USA, 2009, pp. 3163--3168.

\bibitem{Mesbahi2010}
M.~Mesbahi and M.~Egerstedt, \emph{{Graph-Theoretic Methods in Multiagent
  Networks}}.\hskip 1em plus 0.5em minus 0.4em\relax Princeton University
  Press, 2010.

\bibitem{Rahmani2009a}
A.~Rahmani, M.~Ji, M.~Mesbahi, and M.~Egerstedt, ``{Controllability of
  multi-agent systems from a graph-theoretic perspective},'' \emph{SIAM Journal
  on Control and Optimization}, vol.~48, no.~1, pp. 162--186, 2009.

\bibitem{Chapman2015a}
A.~Chapman and M.~Mesbahi, ``{State controllability, output controllability and
  stabilizability of networks : A symmetry perspective},'' in \emph{Proc. 54th
  IEEE Conference on Decision and Control}, Osaka, Japan, 2015, pp. 4776--4781.

\bibitem{Alemzadeh2017}
S.~Alemzadeh, M.~Hudoba~de Badyn, and M.~Mesbahi, ``{Controllability and
  stabilizability analysis of signed consensus networks},'' in \emph{Proc. IEEE
  Conference on Control Technology and Applications}, Kohala Coast, USA, 2017,
  pp. 55--60.

\bibitem{Aguilar2014}
C.~O. Aguilar and B.~Gharesifard, ``{Necessary conditions for controllability
  of nonlinear networked control systems},'' in \emph{Proc. American Control
  Conference}, Portland, USA, 2014, pp. 5379--5383.

\bibitem{Hudobadebadyn2017}
M.~Hudoba~de Badyn, S.~Alemzadeh, and M.~Mesbahi, ``{Controllability and
  data-driven identification of bipartite consensus on nonlinear signed
  networks},'' in \emph{Proc. 56th IEEE Conference on Decision and Control},
  Melbourne, Australia, 2017.

\bibitem{Altafini2013}
C.~Altafini, ``{Consensus problems on networks with antagonistic
  interactions},'' \emph{IEEE Transactions on Automatic Control}, vol.~58,
  no.~4, pp. 935--946, 2013.

\bibitem{HudobaDeBadyn2015a}
M.~Hudoba~de Badyn, A.~Chapman, and M.~Mesbahi, ``{Network entropy: A
  system-theoretic perspective},'' in \emph{Proc. 54th IEEE Conference on
  Decision and Control}, Osaka, Japan, 2015, pp. 5512--5517.

\bibitem{Siami2017}
M.~Siami and N.~Motee, ``{Growing linear consensus networks endowed by spectral
  systemic performance measures},'' \emph{IEEE Transaction on Automatic
  Control}, vol.~63, no.~7, pp. 2091 -- 2106, 2018.

\bibitem{Siami2014a}
------, ``{Fundamental limits and tradeoffs on disturbance propagation in
  linear dynamical networks},'' \emph{IEEE Transaction on Automatic Control},
  vol.~61, no.~12, pp. 4055--4062, 2016.

\bibitem{Chapman2015}
A.~Chapman, E.~Schoof, and M.~Mesbahi, ``{Online Adaptive Network Design for
  Disturbance Rejection},'' in \emph{Principles of Cyber-Physical
  Systems}.\hskip 1em plus 0.5em minus 0.4em\relax Cambridge University Press,
  2015.

\bibitem{Chapman2013a}
A.~Chapman and M.~Mesbahi, ``{Semi-autonomous consensus: Network measures and
  adaptive trees},'' \emph{IEEE Transactions on Automatic Control}, vol.~58,
  no.~1, pp. 19--31, 2013.

\bibitem{Albert2002}
R.~Albert and A.-L. Barab{\'{a}}si, ``{Statistical mechanics of complex
  networks},'' \emph{Reviews of Modern Physics}, vol.~74, no.~1, pp. 47--97,
  2002.

\bibitem{Hudobadebadyn2016}
M.~Hudoba~de Badyn and M.~Mesbahi, ``{Growing controllable networks via
  whiskering and submodular optimization},'' in \emph{Proc. 55th IEEE
  Conference on Decision and Control}, Las Vegas, USA, 2016, pp. 867--872.

\bibitem{Siami2016}
M.~Siami and N.~Motee, ``{Tractable approximation algorithms for the NP-hard
  problem of growing linear consensus networks},'' in \emph{Proceedings of the
  American Control Conference}, Boston, USA, 2016, pp. 6429--6434.

\bibitem{chapman2014cart}
A.~Chapman, M.~Nabi-Abdolyousefi, and M.~Mesbahi, ``{Controllability and
  observability of network-of-networks via Cartesian products},'' \emph{IEEE
  Transaction on Automatic Control}, vol.~59, no.~10, pp. 2668--2679, 2014.

\bibitem{Valdes1979}
J.~Valdes, R.~E. Tarjan, and E.~L. Lawler, ``{The recognition of Series
  Parallel digraphs},'' \emph{Proceedings of the eleventh annual ACM symposium
  on Theory of computing}, pp. 1--12, 1979.

\bibitem{Eppstein1992}
D.~Eppstein, ``{Parallel Recognition of Series-Parallel Graphs},''
  \emph{Information and Computation}, vol.~98, pp. 41--55, 1992.

\bibitem{Takamizawa1986}
K.~Takamizawa, T.~Nishizeki, and N.~Saito, ``{Linear-time computability of
  combinatorial problems on series-parallel graphs},'' \emph{Journal of the
  Association for Computing Machinery}, vol.~29, no.~3, pp. 623--641, 1986.

\bibitem{Barooah2008}
P.~Barooah and J.~P. Hespanha, ``{Estimation from relative measurements:
  Electrical analogy and large graphs},'' \emph{IEEE Transactions on Signal
  Processing}, vol.~56, no.~6, pp. 2181--2193, 2008.

\bibitem{He1987}
X.~He and Y.~Yesha, ``{Parallel recognition and decomposition of two terminal
  series parallel graphs},'' \emph{Information and Computation}, vol.~75,
  no.~1, pp. 15--38, 1987.

\end{thebibliography}
\end{document}